\crefname{equation}{}{}
\crefname{chapter}{Chapter}{Chapters}
\crefname{item}{item}{items}
\crefname{figure}{Figure}{Figures}
\crefname{theo}{Theorem}{Theorems}
\crefname{lemm}{Lemma}{Lemmas}
\crefname{prop}{Proposition}{Propositions}
\crefname{corollary}{Corollary}{Corollaries}
\crefname{definition}{Definition}{Definitions}
\crefname{fact}{Fact}{Facts}
\crefname{exam}{Example}{Examples}
\crefname{algorithm}{Algorithm}{Algorithms}
\crefname{rema}{Remark}{Remarks}
\crefname{note}{Note}{Notes}
\crefname{notation}{Notation}{Notations}
\crefname{case}{Case}{Cases}
\crefname{exercise}{Exercise}{Exercises}
\crefname{question}{Question}{Questions}
\crefname{claim}{Claim}{Claims}
\crefname{enumi}{}{}
\title[Characterizations of Stability of Error Bounds]{Characterizations of Stability of Error Bounds for Convex Inequality Constraint Systems}
\author{\firstname{Zhou} \middlename{} \lastname{Wei}}
\address{College of Mathematics and Information Science, Hebei University, Baoding 071002, China}
\email{weizhou@hbu.edu.cn}
\address{Department of Mathematics, Yunnan University,  Kunming 650091, China}
\email{wzhou@ynu.edu.cn}
\thanks{The first author was supported by the National Natural Science Foundation of  China (grant 11971422) and CAS ``Light of West China" Program, and by Jointed Key Project of Yunnan Provincial Science and Technology Department and Yunnan University [No. 2018FY001014] and Program for Innovative Research Team (in Science and Technology) in Universities of Yunnan Province [No. C176240111009].}
\author{\firstname{Michel} \lastname{Th\'era}}
\address{XLIM UMR-CNRS 7252, Universit\'e de Limoges\\Limoges, France \\Federation University Australia, Ballarat}
\thanks{Research of the second author benefited from the support of the FMJH Program PGMO and from the
	support of EDF}
\email{michel.thera@unilim.fr}
\author{\firstname{Jen-Chih} \middlename{} \lastname{Yao}}
\address{Research Center for Interneural Computing, China Medical University Hospital\\
	China Medical University, Taichung 40402, Taiwan}
\email{yaojc@mail.cmu.edu.tw}
\keywords{Local and global  error  bounds, Stability, Convex inequality, Semi-infinite convex constraint systems, Directional derivative}
\begin{abstract} 
 In this paper, we mainly study error bounds for a single convex inequality and semi-infinite convex constraint systems, and give characterizations of stability of error bounds via directional derivatives. For a single convex inequality, it is proved that the stability of local error bounds under small perturbations is essentially equivalent to the non-zero minimum of the directional derivative at a reference point over the unit sphere, and the stability of global error bounds is proved to be equivalent to the strictly positive infimum of the directional derivatives, at all points in the boundary of the solution set, over the unit sphere as well as some mild constraint qualification. When these results are applied to semi-infinite convex constraint systems, characterizations of stability of local and global error bounds under small perturbations are also provided. In particular such stability of error bounds is proved to only require that all component functions in  semi-infinite convex constraint systems have the same linear perturbation. Our work demonstrates that verifying the stability of error bounds for convex inequality constraint systems is, to some degree, equivalent to solving convex minimization problems (defined by directional derivatives) over the unit sphere.

%% If you want to inform the reader of the paper about your
%% supplementary material, you can refer to it this way. The file
%% itself should be placed in a directory called Attach.
%Supplementary material for this article is supplied as a separate archive \cdrattach{mycode.zip}, the related data is displayed in the document \cdrattach{supplement-doc.pdf}.
\end{abstract}
\begin{document}

% Use the \maketitle command after the abstract
\maketitle

\section{Introduction}

Our main goal  in this paper is to study error bounds of a single convex inequality and semi-infinite convex constraint systems and  to provide characterizations of stability of local and global error bounds under perturbations. Theory of error bounds can be traced back to the pioneering work by Hoffman \cite{28} for systems of affine functions in which it has been proved that for a given matrix $A$ and a vector $b$, the distance from $x$ to the polyhedral set $\{u:Au\leq b\}$ is bounded above by some scalar constant (depending on $A$ only) times the norm of the residual error $\|(Ax-b)_+\|$, where for any vector $z$, $(z)_+$ denotes the positive part of $z$.

Hoffman's result was extensively and intensively studied by Robinson \cite{54}, Mangasarian
\cite{42}, Auslender and Crouzeix \cite{1}, Pang \cite{52}, Lewis and Pang \cite{41}, Klatte and Li \cite{34},
Jourani \cite{jourani}, and there have been important developments of various aspects of error bounds for convex and nonconvex functions in recent years. We refer the readers to bibliographies \cite{Aze01,Aze02,3,7,18,CK,DL,22,24,29,ioffe-book, Kruger-LY, Luke,44,48,penot-book,55} and references therein for the summary of the theory of error bounds and their various applications for more details.

Error bounds have been applied  to the sensitivity analysis of linear programs (cf. \cite{Rob73,Rob77}) and to the  convergence analysis of descent methods for linearly constrained minimization (cf. \cite{Gul92,HLu,IuD90,TsL92,TsB93}).  In addition, it is proved that error bounds   play an important role in the feasibility problem of finding a point in the intersection of a finite collection of closed convex sets (cf. \cite{5,6,7}) and have an application in the domain of image reconstruction (cf. \cite{16}). Also,  error bounds are extensively discussed in connection with weak sharp minima of functions and metric regularity/subregularity as well as Aubin property/calmness of set-valued mappings (cf. \cite{AT,3,BD1,BD2,14,24,ioffe-JAMS-1,ioffe-JAMS-2,K1,K2,54,59,60} and references therein).

Since real-world problems typically have inaccurate data, it is of practical and theoretical interest to know the behavior of error bounds under data perturbations. For systems of linear inequalities, this question has been studied by Luo and Tseng \cite{LT} and Az\'e and Corvellec \cite{2}. Subsequently Deng \cite{D} studied systems of a finite number of convex inequalities. In 2005, Zheng and Ng \cite{ZN} considered the stability of error bounds for systems of conic linear inequalities in a general Banach space. In 2010, Ngai, Kruger and Th\'era \cite{45} studied the stability of error bounds for semi-infinite convex constraint systems in a Euclidean space and established subdifferential characterizations of the stability under small perturbations. The infinite dimensional extensions were considered by Kruger, Ngai and Th\'era in \cite{38}. In 2012, by relaxing the convexity assumption, Zheng and Wei \cite{ZW} discussed the stability of error bounds for quasi-subsmooth (not necessarily convex) inequalities in a general Banach space and provided Clarke subdifferential characterizations of the stability of error bounds. In 2018, Kruger, L\'opez and Th\'era \cite{MP2018} extended the development in \cite{38,45} and characterized the stability of error bounds for convex inequalities in the Banach space setting. From the viewpoint of infinite dimensional Banach spaces, results on the stability of error bounds in \cite{38,MP2018,45,ZW} are dual conditions, and it is a pretty natural idea to study this issue not involving the dual space since information on the dual space may be missing. Inspired by this observation, we study characterizations of stability of local and global error bounds of a single convex inequality and semi-infinite convex constraint systems via directional derivatives. For a single convex inequality, we prove that the stability of local error bounds under small perturbations holds if and only if the minimum of the directional derivative at a reference point over the unit sphere is non-zero, and the stability of global error bounds is proved to be equivalent to the strictly positive infimum of the directional derivatives, at all points in the boundary of the solution set, over the unit sphere as well as some mild constraint qualification. When these results are applied to semi-infinite convex constraint systems, characterizations of the stability of local and global error bounds under small perturbations are also provided. Particularly such stability of error bounds is proved to only require that all component functions in  semi-infinite convex constraint systems have the same linear perturbation. Our work demonstrates that verifying the stability of error bounds for convex inequality constraint systems is, to some degree, equivalent to solving optimization/minimization problems (defined by directional derivatives) over the unit sphere.

The paper is organized as follows. In Section 2, we give some definitions and preliminary results. Section 3 is devoted to the study on stability of error bounds for a single convex inequality. In terms of directional derivatives, we provide characterizations of local and global error bounds for a single convex inequality under small perturbations(see \cref{theo:3.2} and \cref{theo:3.4}). When these results are applied to the semi-infinite convex constraint systems in Section 4, the stability of local and global error bounds can be obtained (see  \cref{theo:4.1} and  \cref{theo:4.2}). Conclusions of this paper are given in Section 5.

%The paper is organized as follows. In Section 2, we give some definitions and preliminary results. Section 3 is devoted to the stability of local and global error bounds of a single convex inequality. Through  the directional derivatives, we prove that the stability of local error bounds under small perturbations is equivalent to the non-zero minimum of the directional derivative at a reference point over the unit sphere (see  \cref{theo:3.2}), and the stability of global error bounds is equivalent to the strictly positive infimum of the absolute directional derivative over the boundary of the solution set times the sphere as well as some mild constraint qualification (see \cref{theo:3.4}). The same idea is applied to the stability of error bounds when considering the semi-infinite convex constraint systems in Section 4. By using results obtained in Section 3, we obtain characterizations via directional derivatives for the stability of local and global error bounds (see  \cref{theo:4.1} and  \cref{theo:4.2}). Conclusions of this paper are given in Section 5.

\setcounter{equation}{0}
\section{Preliminaries}
In what follows we consider the Euclidean space $\mathbb{R}^m$  equipped with the norm $\|\cdot\|:=\sqrt{\langle\cdot, \cdot\rangle}$. We denote by  $\mathbf{B}^m$ the closed unit ball of $\mathbb{R}^m$  and  following the standard notation by $\Gamma_0(\mathbb{R}^m)$ the set of extended-real-valued lower semicontinuous convex functions $f:\mathbb{R}^m\rightarrow \mathbb{R}\cup\{+\infty\}$ which are supposed to be proper, that is such that ${\rm dom}(f):=\{x\in\mathbb{R}^m: f(x)<+\infty\}$ is nonempty.

For a subset $D$ of $\mathbb{R}^m$, we denote by $d(x,D)$ the distance from $x$ to $D$ which is defined by
$$
d(x,D):=\inf\{\|x-y\|:y\in D\},
$$
where we use the convention $\inf\emptyset=+\infty$. We denote by ${\rm bdry}(D)$  and ${\rm int}(D)$ the  boundary and the interior of $D$, respectively.

Let $f\in\Gamma_0(\mathbb{R}^m)$ and $\bar x\in{\rm dom}(f)$. For any $h\in \mathbb{R}^m$, we recall that the  directional derivative  $f'(\bar x,h)$  of $f$ at $\bar x$ along  the direction $h$  is defined as
\begin{equation}\label{2.1}
	f'(\bar x,h):=\lim\limits_{t\rightarrow0^+}\frac{f(\bar x+th)-f(\bar x)}{t}.
\end{equation}
It is known from \cite{roc} that the function
$$
t\mapsto \frac{f(\bar x+th)-f(\bar x)}{t}
$$
is nonincreasing as $t\rightarrow 0^+$ and thus
\begin{equation}\label{2.2}
	f'(\bar x,h)=\inf_{t>0}\frac{f(\bar x+th)-f(\bar x)}{t}.
\end{equation}
We denote by $\partial f(\bar x)$ the subdifferential of $f$ at $\bar x$  which is defined by
$$
\partial f(\bar x):=\{x^*\in \mathbb{R}^m: \langle x^*,x-\bar x\rangle\leq f(x)-f(\bar x)\ \ {\rm for\ all} \ x\in \mathbb{R}^m\}.
$$
It is known from \cite{Moreau,roc} that 
%if $\partial f(\bar x)$ is nonempty, one has
\begin{equation}\label{2.3}
	\partial f(\bar x)=\{x^*\in\mathbb{R}^m: \langle x^*,h\rangle\leq f'(\bar x,h)\ \ {\rm for\ all} \ h\in \mathbb{R}^m\}
\end{equation}
and if $f$ is continuous at $(\bar x)$, one has
\begin{equation}\label{2.4}
f'(\bar x,h)=\max\{ \langle x^*,h\rangle: x^* \in\partial f(\bar x)\}.
\end{equation}
%where we use the convention $\sup\emptyset=-\infty$.\\

We conclude this section with the following lemma which is used in our analysis.
\begin{lemm}\label{lemm:2.1}
Let $f\in\Gamma_0(\mathbb{R}^m)$ and $\bar x\in {\rm dom}(f)$ be such that $\inf_{\|h\|=1}f'(\bar x,h)<0$. Then
\begin{equation}\label{2.5}
	%\begin{equation}\label{2.5}
	-\inf_{\|h\|=1}f'(\bar x,h)=d(0,\partial f(\bar x)).
	%\end{equation}
\end{equation}
\end{lemm}

\begin{proof}
We denote $\alpha:=\inf_{\|h\|=1}f'(\bar x,h)$.

If $\alpha=-\infty$, then one has $\partial f(\bar x)=\emptyset$ by \eqref{2.3} and thus \eqref{2.5} holds.

Next, we consider the case $\alpha>-\infty$.  Note that $\alpha<0$ and thus $0\not\in\partial f(\bar x)$. Let $r:=d(0,\partial f(\bar x))$. For any  $\varepsilon>0$, we can select $u^*_{\varepsilon}\in (r+\varepsilon)\mathbf{B}^m\cap\partial f(\bar x)$. Then for any $h\in\mathbb{R}^m$ with $\|h\|=1$, one has
$$
f'(\bar x,h)\geq \langle u^*_{\varepsilon}, h\rangle\geq -(r+\varepsilon)
$$
and consequently
$$
\inf_{\|h\|=1}f'(\bar x,h)\geq -(r+\varepsilon).
$$
By letting $\varepsilon\rightarrow 0^+$, it follows that $d(0,\partial f(\bar x))\geq-\alpha$. 

We now assume that $d(0,\partial f(\bar x))>-\alpha>0$. Then $\bar x$ is not a global minimizer of $f$. We claim that there exists $y\in \mathbb{R}^m$ such that
\begin{equation}\label{1}
	f(y)-f(\bar x)<\alpha\|y-\bar x\|.
\end{equation}
(Indeed, suppose on the contrary that
$$
f(x)-f(\bar x)-\alpha\|x-\bar x\|\geq 0,\forall x\in \mathbb{R}^m.
$$
This implies that 
$$
\varphi(\bar x)=\min_{x\in \mathbb{R}^m}\varphi(x),
$$
where $\varphi(x):=f(x)-f(\bar x)-\alpha\|x-\bar x\|$ and thus $0\in\partial\varphi(\bar x)$. Then, there exist $x^*\in\partial f(\bar x)$ and $u^*\in \mathbf{B}^m$ such that
$$
x^*-\alpha u^*=0,
$$
which means that $d(0,\partial f(\bar x))\leq\|x^*\|\leq-\alpha$, a contradiction with $d(0,\partial f(\bar x))>-\alpha$.)

Using the convexity of $f$, when $t>0$ is sufficiently small, one has
$$
\frac{f(\bar x+t\frac{y-\bar x}{\|y-\bar x\|})-f(\bar x)}{t}\leq\frac{f(y)-f(\bar x)}{\|y-\bar x\|}<\alpha
$$
and thus
$$
f'(\bar x,\frac{y-\bar x}{\|y-\bar x\|})<\alpha
$$
which is a contradiction. This means that \eqref{2.5} holds. The proof is complete.%\hfill$\Box$
\end{proof}

\section{Stability of Error Bounds for A Single Convex Inequality}
In this section, we mainly study local and global error bounds for a single convex inequality, and provide characterizations of stability (in terms of directional derivatives) of error bounds. We first recall the definition of error bounds for a single convex inequality.

For a given $f\in\Gamma_0(\mathbb{R}^m)$, we consider the set of solutions of a single convex inequality:
\begin{equation}\label{3.1}
	S_f:=\{x\in\mathbb{R}^m: f(x)\leq 0\}.
\end{equation}
Recall that convex inequality \eqref{3.1} is said to have a \textit{global error bound} if there exists a constant $\tau\in(0,+\infty)$ such that
\begin{equation}\label{3.2}
	d(x, S_f)\leq\tau [f(x)]_+\ \ \forall x\in\mathbb{R}^m,
\end{equation}
where $[f(x)]_+:=\max\{f(x), 0\}$. We denote by $\tau_{\min}(f):=\inf\{\tau>0: \eqref{3.2}\ {\rm holds}\}$ the global error bound modulus of $S_f$.

For $\bar x\in {\rm bdry}(S_f)$, convex inequality \eqref{3.1} is said to have a \textit{local error bound} at  $\bar x$ if there exist  constants $\tau,\delta\in(0,+\infty)$ such that
\begin{equation}\label{3.3}
	d(x, S_f)\leq\tau [f(x)]_+\ \ \forall x\in B(\bar x,\delta).
\end{equation}
We denote by $\tau_{\min}(f,\bar x):=\inf\{\tau>0:\ {\rm there\ exists} \ \delta>0\ {\rm such\ that} \ \eqref{3.3}\ {\rm holds}\}$ the local error bound modulus of $S_f$ at $\bar x$.

%It is known that $\tau_{\min}(f)$ and $\tau_{\min}(f,\bar x)$ are the exact lower bounds of all Hoffman constants.

The following theorem gives characterizations of global and local error bounds. We refer the readers to \cite{2} for more details. This result is needed in the sequel.

\begin{theo}\label{theo:3.1}
	Let $f\in\Gamma_0(\mathbb{R}^m)$. Then
	\begin{itemize}
		\item[\rm (i)] $S_f$ has a global error bound if and only if
		$$
		\eta(f):=\inf\{d(0,\partial f(x)): x\in\mathbb{R}^m, f(x)>0\}>0.
		$$
		More precisely, $\tau_{\min}(f)=[\eta(f)]^{-1}$.
		\item[\rm (ii)] $S_f$ has a local error bound at $\bar x\in{\rm bdry}(S_f)$ if and only if
		$$
		\eta(f,\bar x):=\liminf_{x\rightarrow\bar x, f(x)>0}d(0,\partial f(x))>0.
		$$
		More precisely, $\tau_{\min}(f,\bar x)=[\eta(f,\bar x)]^{-1}$.
		\item[\rm (iii)] The following equality holds:
		$$
		\tau_{\min}(f)=\sup_{\bar x\in{\rm bdry}S_f}\tau_{\min}(f,\bar x)
		$$
	\end{itemize}
\end{theo}

For a mapping $\phi:X\rightarrow Y$ between two normed linear spaces $X,Y$, we denote by ${\rm Lip}(\phi)$ the Lipschitz constant which is defined by
$$
{\rm Lip}(\phi):=\sup_{u,v\in X,u\not=v}\frac{\|\phi(u)-\phi(v)\|}{\|u-v\|}.
$$

\subsection{Stability of Local Error bounds}

In this subsection, we mainly study local error bounds for a single convex inequality and aim to provide equivalent criterion for the stability of local error bounds for convex inequality \eqref{3.1}. We first give a sufficient condition for the local error bound of convex inequality \eqref{3.1}.

\begin{prop}\label{prop:3.1}
	Let $f\in\Gamma_0(\mathbb{R}^m)$ and $\bar x\in S_f$ such that $\inf_{\|h\|=1}f'(\bar x, h)\not=0$. Then convex inequality \eqref{3.1} has a local error bound at $\bar x$ and moreover
	\begin{equation} \label{3.8}
		%\begin{equation}\label{3.8}
		\tau_{\min}(f,\bar x)\leq \frac{1}{\Big|\inf\limits_{\|h\|=1}f'(\bar x, h)\Big|}.
	\end{equation}
	%\end{empheq}
\end{prop}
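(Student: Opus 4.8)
The plan is to write $\rho:=\min_{\|h\|=1}f'(\bar x,h)\neq 0$ and to split the argument according to the sign of $\rho$. First I note that $\rho<+\infty$, since $\rho\le f(\bar x+h)-f(\bar x)$ along any direction $h$ with $\bar x+h\in{\rm dom}(f)$; the borderline value $\rho=-\infty$ makes the right-hand side of \eqref{3.8} equal to $0$ and is treated by a direct limiting argument along a direction of infinite descent, so I concentrate on finite $\rho$. I would also reduce to the case $f(\bar x)=0$: when $f(\bar x)<0$ the point $\bar x$ sits in ${\rm int}(S_f)$, where $d(\cdot,S_f)$ vanishes on a neighborhood of $\bar x$ and \eqref{3.3} holds vacuously. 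Thus the substantive situation is $f(\bar x)=0$, which places $\bar x\in{\rm bdry}(S_f)$ and lets me invoke \cref{theo:3.1}(ii).

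When $\rho>0$, I argue directly from convexity. Using positive homogeneity of $h\mapsto f'(\bar x,h)$ and the monotonicity formula \eqref{2.2} at $t=1$, I get $f(y)-f(\bar x)\ge f'(\bar x,y-\bar x)\ge \rho\|y-\bar x\|$ for every $y$. With $f(\bar x)=0$ this forces $f(y)\ge\rho\|y-\bar x\|>0$ for $y\neq\bar x$, so $S_f=\{\bar x\}$; hence for any $x$ with $f(x)>0$ one has $d(x,S_f)=\|x-\bar x\|\le\rho^{-1}f(x)=\rho^{-1}[f(x)]_+$. This is a global (so in particular local) error bound with modulus $\rho^{-1}=1/|\rho|$, which is exactly \eqref{3.8}.

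The main case is $\rho<0$, and here I would route through the dual characterization. \cref{lemm:2.1} applies and gives $d(0,\partial f(\bar x))=-\rho=|\rho|$. By \cref{theo:3.1}(ii) it then suffices to show $\eta(f,\bar x)=\liminf_{x\to\bar x,\,f(x)>0}d(0,\partial f(x))\ge|\rho|$, since that yields $\tau_{\min}(f,\bar x)=\eta(f,\bar x)^{-1}\le|\rho|^{-1}$. The bridge is the lower semicontinuity of $x\mapsto d(0,\partial f(x))$ at $\bar x$, which I would obtain from the closedness of the graph of the convex subdifferential: choosing $x_k\to\bar x$ with $f(x_k)>0$ along which $d(0,\partial f(x_k))\to\eta(f,\bar x)$, and letting $x_k^*$ be the minimal-norm element of $\partial f(x_k)$, the boundedness of $(x_k^*)$ (in the nontrivial case $\eta(f,\bar x)<+\infty$) produces a subsequential limit $x^*$ with $\|x^*\|=\eta(f,\bar x)$, and graph closedness forces $x^*\in\partial f(\bar x)$, so $d(0,\partial f(\bar x))\le\eta(f,\bar x)$. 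Combined with \cref{lemm:2.1} this gives $\eta(f,\bar x)\ge|\rho|>0$ and the claimed estimate.

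The step I expect to be the main obstacle is precisely this closed-graph/semicontinuity passage. I must guarantee that $\partial f(x_k)\neq\emptyset$ for the relevant nearby points so that minimal-norm subgradients exist, and confirm that $f(\bar x)=0$ together with $\rho<0$ genuinely places $\bar x$ on ${\rm bdry}(S_f)$: the sign $\rho<0$ supplies a descent direction (points with $f<0$ arbitrarily close), while $0\notin\partial f(\bar x)$ forces $\max_{\|h\|=1}f'(\bar x,h)>0$ (points with $f>0$ arbitrarily close), so that \cref{theo:3.1}(ii) is legitimately applicable. Everything else is routine once the inequality $\eta(f,\bar x)\ge|\rho|$ is in hand.
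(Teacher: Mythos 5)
Your proof is correct and follows essentially the same route as the paper: a direct convexity estimate when $\min_{\|h\|=1}f'(\bar x,h)>0$, and, when it is negative, the dual passage through \cref{lemm:2.1} combined with \cref{theo:3.1}(ii). The only real difference is that you spell out the bridge $\eta(f,\bar x)\geq d(0,\partial f(\bar x))$ via closedness of the graph of $\partial f$ --- a step the paper leaves implicit in the phrase ``by virtue of \cref{theo:3.1}'' --- while your side remark that $f(\bar x)<0$ forces $\bar x\in{\rm int}(S_f)$ is not quite accurate when ${\rm dom}(f)$ is not full-dimensional (an lsc convex $f$ need not be continuous there), although the local error bound in that case is trivially valid by a one-line segment argument, so the conclusion is unaffected.
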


\begin{proof} Let $\beta(f,\bar x):=\inf_{\|h\|=1}f'(\bar x, h)$. Suppose that $\beta(f,\bar x)>0$. Then for any $x\not=\bar x$, by \eqref{2.2}, one can verify that
	\begin{eqnarray*}
		f(x)-f(\bar x)&=&f\Big(\bar x+\|x-\bar x\|\frac{x-\bar x}{\|x-\bar x\|}\Big)-f(\bar x)\\
		&\geq& f'\Big(\bar x,\frac{x-\bar x}{\|x-\bar x\|}\Big)\|x-\bar x\|\\
		&\geq&\beta(f,\bar x)\|x-\bar x\|\geq \beta(f,\bar x) d(x,S_f).
	\end{eqnarray*}
	This means that $\tau_{\min}(f,\bar x)\leq [\beta(f,\bar x)]^{-1}$.
	
	Suppose that $\beta(f,\bar x)<0$. Then  \text{\color{blue}Lemma 1} implies that $d(0,\partial f(\bar x))=-\beta(f,\bar x)$  and by virtue of  \cref{theo:3.1}, one has
	$$
	\tau(f,\bar x)\leq\frac{1}{-\beta(f,\bar x)}.
	$$
	Hence \eqref{3.8} holds. The proof is complete. \end{proof}%\hfill$\Box$\

%\noindent{\bf Remark 3.1.}f(x)-f(\bar x)\geq f'\Big(\bar x,\frac{x-\bar x}{\|x-\bar x\|}\Big)\|x-\bar x\|\geq \inf_{\|h\|=1}f'(\bar x, h)\|x-\bar x\|>0.

\begin{rema} Close analysis of the proof of \text{\color{blue}Proposition 3} shows that the solution set $S_f$ will reduce to the singleton $\{\bar x\}$ if $\inf_{\|h\|=1}f'(\bar x, h)>0$ and $f(\bar x)=0$, which means that $\bar x$ is the sharp (or strong) minimizer of $f$. Further, it should be noted that  the condition $\inf_{\|h\|=1}f'(\bar x, h)\not=0$ is only sufficient for the existence of  a local error bound of  \eqref{3.1}. Indeed, let $f(x)\equiv 0$ for all $x\in \mathbb{R}$. Then $S_f=\mathbb{R}$ has a global error bound, while $\inf_{\|h\|=1}f'(\bar x, h)=0$ for all $\bar x\in \mathbb{R}$. \hfill$\Box$\\
\end{rema}%

The following  theorem shows that the condition $\inf_{\|h\|=1}f'(\bar x, h)\not=0$ can be used to give characterizations of stability of the local error bound for  the convex inequality \eqref{3.1}. For the sake of completeness, we provide a self-contained proof of this theorem.
\begin{theo}\label{theo:3.2}
	Let $f\in\Gamma_0(\mathbb{R}^m)$ and $\bar x\in \mathbb{R}^m$ be such that $f(\bar x)=0$. Then the following statements are equivalent:
	\begin{itemize}
		\item[\rm (i)] $\inf_{\|h\|=1}f'(\bar x, h)\not=0$;
		\item[\rm (ii)] There exist constants $c,\varepsilon>0$ such that for all $g\in\Gamma_0(\mathbb{R}^m)$ satisfying $\bar x\in S_g$ and
		\begin{equation}\label{3.9}
			\limsup_{x\rightarrow \bar x}\frac{|(f(x)-g(x))-(f(\bar x)-g(\bar x))|}{\|x-\bar x\|}\leq \varepsilon,
		\end{equation}
		one has $\tau_{\min}(g,\bar x)\leq c$;
		\item[\rm (iii)] There exist constants $c,\varepsilon>0$ such that for all $u^*\in\mathbb{R}^m$ with $\|u^*\|\leq 1$, one has $\tau_{\min}(g_{u^*, \varepsilon},\bar x)\leq c$, where $g_{u^*, \varepsilon}(x):=f(x)+\varepsilon\langle u^*, x-\bar x\rangle$ for all $x\in \mathbb{R}^m$.
	\end{itemize}
\end{theo}

\begin{proof} Let $\beta(f,\bar x):=\inf_{\|h\|=1}f'(\bar x, h)$.

	(i)\,$\Rightarrow$\,(ii): Take any $\varepsilon>0$ such that $\varepsilon< |\beta(f,\bar x)|$ and let $c:=(|\beta(f,\bar x)|-\varepsilon)^{-1}$. For any $g\in\Gamma_0(\mathbb{R}^m)$ such that  $\bar x\in S_g$ and \eqref{3.9} holds. If $\beta(f,\bar x)>0$, then for any $h\in\mathbb{R}^m$, one has
	$$
	g'(\bar x, h)\geq f'(\bar x, h)-\varepsilon,
	$$
	and thus
	$$
	\inf_{\|h\|=1}g'(\bar x, h)\geq\inf_{\|h\|=1}f'(\bar x, h)-\varepsilon\geq \beta(f,\bar x)-\varepsilon.
	$$
	This and \text{\color{blue}Proposition 3} imply that $\tau_{\min}(g,\bar x)\leq [\beta(f,\bar x)-\varepsilon]^{-1}=c$.
	
	If $\beta(f,\bar x)<0$, then for any $h\in\mathbb{R}^m$, one has
	$$
	g'(\bar x, h)\leq f'(\bar x, h)+\varepsilon,
	$$
	and thus
	$$
	\inf_{\|h\|=1}g'(\bar x, h)\leq\inf_{\|h\|=1}f'(\bar x, h)+\varepsilon\leq \beta(f,\bar x)+\varepsilon.
	$$
	By using \text{\color{blue}Proposition 3} again, one yields that $\tau_{\min}(g,\bar x)\leq [-\beta(f,\bar x)-\varepsilon]^{-1}=c$. Hence (ii) holds.

	Note that the implication (ii)\,$\Rightarrow$\,(iii) is clear and it remains to prove (iii)\,$\Rightarrow$\,(i).
	
	%Take $h_0\in\mathbb{R}^m$ with $\|h_0\|=1$ such that \langle h_0, h_0\rangle=1.
	Let $\varepsilon>0$. Suppose on the contrary that there exists a sequence $\{h_k\}$ in $\mathbb{R}^m$ with $\|h_k\|=1$ such that
	$$
	\alpha_k:=f'(\bar x,h_k)\rightarrow 0.
	$$
	Without loss of generality, we can assume that $|\alpha_k|<\varepsilon$ for all $k$ (considering sufficiently large $k$ if necessary) and consider the function $g_{\varepsilon}(x):=f(x)+\varepsilon \langle h_k, x-\bar x\rangle$ for all $x\in\mathbb{R}^m$. From $\beta(f,\bar x)=0$, one can verify that $f(x)\geq f(\bar x)$ for any $x\not=\bar x$. By the definition of directional derivative, there exists a sequence $\{\delta_k\}$ decreasing to $0$ such that
	\begin{equation}\label{3.10}
		f(\bar x+\delta_kh_k)<f(\bar x)+(\varepsilon+\alpha_k)\delta_k=\inf_{x\in\mathbb{R}^m}f(x)+(\varepsilon+\alpha_k)\delta_k.
	\end{equation}
	By virtue of the Ekeland variational principle, we can select $z_k\in\mathbb{R}^m$ such that  $ \|z_k-(\bar x+\delta_kh_k)\|<\frac{\delta_k}{2}, f(z_k)\leq f(\bar x+\delta_kh_k)$ and
	\begin{equation}\label{3.11}
		f(x)+2(\varepsilon+\alpha_k)\|x-z_k\|>f(z_k),\ \forall x\not=z_k.
	\end{equation}
	This implies that $z_k\rightarrow \bar x$, $g_{\varepsilon}(\bar x)=f(\bar x)=0$ and
	\begin{eqnarray*}
		g_{\varepsilon}(z_k)&=&f(z_k)+\varepsilon\langle h_k, z_k-\bar x \rangle\\
		&\geq& f(\bar x)+\varepsilon\langle h_k, z_k-\bar x \rangle\\
		&=&\varepsilon\langle h_k, z_k-\bar x-\delta_kh_k \rangle+\varepsilon\delta_k\\
		&>&\varepsilon\delta_k-\frac{1}{2}\varepsilon\delta_k=\frac{1}{2}\varepsilon\delta_k>0.
	\end{eqnarray*}
	We claim that
	\begin{equation}\label{3.12}
		\inf_{\|h\|=1}g_{\varepsilon}'(z_k,h)< 0.
	\end{equation}
	(Otherwise, $ \inf_{\|h\|=1}g_{\varepsilon}'(z_k,h)\geq 0$ and then one has $g_{\varepsilon}(z_k)=\inf_{x\in\mathbb{R}^m}g_{\varepsilon}(x)$, which contradicts $g_{\varepsilon}(\bar x)=0$).

	For any $h\in\mathbb{R}^m$ with $\|h\|=1$ and any $t>0$, by \eqref{3.11}, one has
	\begin{eqnarray*}
		\frac{g_{\varepsilon}(z_k+th)-g_{\varepsilon}(z_k)}{t}=\frac{f(z_k+th)-f(z_k)}{t}+\varepsilon\langle h_k, h\rangle\geq-2(\varepsilon+\alpha_k)\|h\|-\varepsilon=-5\varepsilon
	\end{eqnarray*}
	and consequently
	$$
	0\geq \inf_{\|h\|=1}g_{\varepsilon}'(z_k,h)\geq-5\varepsilon.
	$$
	Thanks to \text{\color{blue}Lemma 1} and \text{\color{blue}Proposition 3}, one can obtain that $\tau_{\min}(g_{\varepsilon},\bar x)\geq\frac{1}{5\varepsilon}$, which contradicts (iii) as $\varepsilon$ is arbitrary. The proof is complete.\end{proof} %\hfill$\Box$\\

%\noindent{\bf Remark 3.2.}
\begin{rema}(a) From \cite{MP2018,45}, the condition \eqref{3.9} means that $g$ is an $\varepsilon$-perturbation of $f$ near $\bar x$, and the condition $\inf_{\|h\|=1}f'(\bar x, h)\not=0$ is proved to be equivalent to the stability of this $\varepsilon$-perturbation of local error bounds. Further, it has been shown in \cref{theo:3.2}  that the stability of such $\varepsilon$-perturbation is essentially equivalent to that of $\varepsilon$-linear perturbation.
	
	(b) \cref{theo:3.2}  can be regarded as the equivalent version of \cite[Theorem 2]{45} since one can prove that
	$$
	\inf_{\|h\|=1}f'(\bar x, h)\not=0\Longleftrightarrow 0\not\in{\rm bdry}(\partial f(\bar x)).
	$$
	\begin{itemize}
		\item[]Indeed, suppose that $0\not\in{\rm bdry}(\partial f(\bar x))$. For the case that $0\in{\rm int}(\partial f(\bar x))$, there is $r>0$ such that $r\mathbf{B}^m\subseteq\partial f(\bar x)$. This and \eqref{2.4} imply that
		$$
		\inf_{\|h\|=1}f'(\bar x,h)\geq r>0.
		$$
		For the case that $0\not\in \partial f(\bar x)$, by the separation theorem, there exists $h_0\in\mathbb{R}^m$ with $\|h_0\|=1$ such that
		$$
		0>\sup\{\langle x^*, h_0\rangle: x^*\in \partial f(\bar x)\}=f'(\bar x,h_0)
		$$
		and consequently
		$$
		\inf_{\|h\|=1}f'(\bar x,h)\leq f'(\bar x,h_0)<0.
		$$
		
		On the other hand, if $0\in{\rm bdry}(\partial f(\bar x))$, then $\inf_{\|h\|=1}f'(\bar x,h)\geq 0$ and for any $\varepsilon>0$, we can select $u^*_{\varepsilon}\in \varepsilon\mathbf{B}^m\backslash\partial f(\bar x)$ and $x_{\varepsilon}\not=\bar x$ such that
		$$
		\langle u^*_{\varepsilon}, x_{\varepsilon}-\bar x\rangle>f(x_{\varepsilon})-f(\bar x).
		$$
		By \eqref{2.2}, for any $t\in(0,1)$, one has
		$$
		\frac{f(\bar x+t(x_{\varepsilon}-\bar x))-f(\bar x)}{t}\leq f(x_{\varepsilon})-f(\bar x)<\langle u^*_{\varepsilon}, x_{\varepsilon}-\bar x\rangle
		$$
		and thus
		$$
		f'\Big(\bar x,\frac{x_{\varepsilon}-\bar x}{\|x_{\varepsilon}-\bar x\|}\Big)\leq \Big\langle u^*_{\varepsilon},\frac{x_{\varepsilon}-\bar x}{\|x_{\varepsilon}-\bar x\|}\Big\rangle\leq\varepsilon.
		$$
		This means that $\inf_{\|h\|=1}f'(\bar x,h)\leq \varepsilon\rightarrow 0^+$ and so $\inf_{\|h\|=1}f'(\bar x,h)=0$.\hfill$\Box$
	\end{itemize}
\end{rema}

\subsection{Stability of Global Error Bounds}

This subsection is devoted to the study of stability of global error bounds for a single convex inequality, and the aim is to give sufficient and/or necessary conditions for the stability via directional derivatives. The following theorem gives a criterion for the stability of global error bounds.

%provides several for a convex inequality.

\begin{theo}\label{theo:3.3}
	Let $f\in\Gamma_0(\mathbb{R}^m)$ be such that ${\rm bdry}(S_f)\subseteq f^{-1}(0)$. Consider the following statements:
	\begin{itemize}
		\item[\rm (i)] There exists $\tau\in (0, +\infty)$ such that
		\begin{equation}\label{3-9}
			\inf\left\{\Big|\inf_{\|h\|=1}f'(\bar x,h)\Big|: \bar x\in{\rm bdry}(S_f)\right\}>\tau.
		\end{equation}
		\item[\rm (ii)] There exist constants $c,\varepsilon\in (0, +\infty)$ such that for all $g\in\Gamma_0(\mathbb{R}^m)$ satisfying
		\begin{equation}\label{3-10}
			S_f\subseteq S_g\  \ {\it and} \ \ {\rm Lip}(f-g)<\varepsilon,
		\end{equation}
		one has $\tau_{\min}(g)\leq c$.
		\item[\rm (iii)] There exist constants $c,\varepsilon\in (0, +\infty)$ such that for all $g\in\Gamma_0(\mathbb{R}^m)$ satisfying
		\begin{equation}\label{3-11}
			{\rm bdry}(S_f)\cap g^{-1}(0)\not=\emptyset\  \ {\it and} \ \ {\rm Lip}(f-g)<\varepsilon,
		\end{equation}
		one has $\tau_{\min}(g)\leq c$.
	\end{itemize}
	Then {\rm (iii)}\,$\Rightarrow$\,{\rm (i)}$\Rightarrow$\,{\rm (ii)}.
\end{theo}
%\item[\rm (iv)] There exist constants $c,\varepsilon>0$ such that for any $\bar x\in{\rm bdry}(S_f)$ and $u^*\in\mathbb{R}^m$ with $\|u^*\|\leq 1$, one has $\tau_{\min}(g_{u^*, \varepsilon})\leq c$, where $g_{u^*, \varepsilon}(x):=f(x)+\varepsilon\langle u^*, x-\bar x\rangle$ for all $x\in \mathbb{R}^m$.

\begin{proof} (i)\,$\Rightarrow$\,(ii): If there is some $\bar x\in{\rm bdry}(S_f)$ such that $\inf_{\|h\|=1}f'(\bar x, h)>0$, then the implication follows by \text{\color{blue}Remark 4} and the proof of \cref{theo:3.2}.
	
	We next consider the case  $\inf_{\|h\|=1}f'(\bar x, h)\leq 0$ for all $\bar x\in{\rm bdry}(S_f)$.
	By virtue of \eqref{3-9} and \cref{theo:3.1}, one can verify that $S_f$ has a global error bound with the constant $\frac{1}{\tau}$; that is,
	\begin{equation}\label{3-12}
		d(x, S_f)\leq \frac{1}{\tau} [f(x)]_+,\ \ \forall x\in\mathbb{R}^m.
	\end{equation}
	Take any $\varepsilon\in (0,\tau)$. Suppose that $g\in\Gamma_0(\mathbb{R}^m)$ satisfies \eqref{3-10}. Let $x\in\mathbb{R}^m$ be such that $g(x)>0$. Then $f(x)>0$ as $S_f\subseteq S_g$. We claim that
	\begin{equation}\label{3.13}
		\inf_{\|h\|=1}f'(x, h)\leq -\tau.
	\end{equation}
	Granting this, by ${\rm Lip}(f-g)<\varepsilon$ in \eqref{3-10}, one can prove that
	$$
	\inf_{\|h\|=1}g'(x,h)\leq\inf_{\|h\|=1}f'(x,h)+\varepsilon\leq -(\tau-\varepsilon).
	$$
	This and \cref{theo:3.1} imply that $\tau_{\min}(g)\leq (\tau-\varepsilon)^{-1}$.
	
	We next prove the claim \eqref{3.13}. Take $z\in {\rm bdry}(S_f)$ such that $\|x-z\|=d(x,S_f)$ and \eqref{3-12} implies that
	$$
	f(x)\geq \tau d(x, S_f)=\tau \|x-z\|.
	$$
	Then for any $t\in (0,1)$, one has
	$$
	f(x+t(z-x))\leq tf(z)+(1-t)f(x)
	$$
	and thus
	$$
	\frac{f(x+t(z-x))-f(x)}{t}\leq -f(x)\leq-\tau\|x-z\|.
	$$
	This means that
	$$
	\inf_{\|h\|=1}f'(\bar x, h)\leq f'\Big(x,\frac{z-x}{\|x-z\|}\Big)\leq-\tau
	$$
	Hence \eqref{3.13} holds.
	
	%Note that the implication of (iii)$\Rightarrow$(iv) holds immediately and it remains to prove that \ \ {\rm and} \ \ \alpha_k+\varepsilon>0
	
	(iii)\,$\Rightarrow$\,(i): Suppose that there exists a sequence $\{x_k\}\subseteq {\rm bdry}(S_f)$ such that
	$$
	\alpha_k:=\inf_{\|h\|=1}f'(x_k, h)\rightarrow 0^-\ ({\rm as}\ k\rightarrow \infty).
	$$
	Let $\varepsilon>0$ be arbitrary and $k$ be sufficiently large such that
	\begin{equation}\label{3.14}
		\frac{3}{2}\alpha_k+\frac{\varepsilon}{2}>0.
	\end{equation}
	Note that for any $x\not=x_k$, one has
	$$
	\frac{f(x)-f(x_k)}{\|x-x_k\|}=\frac{f\big(x_k+\|x-x_k\|\cdot\frac{x-x_k}{\|x-x_k\|}\big)-f(x_k)}{\|x-x_k\|}\geq f'\Big(x_k,\frac{x-x_k}{\|x-x_k\|}\Big)\geq\alpha_k
	$$
	and thus
	\begin{equation}\label{3.15}
		f(x)-\alpha_k\|x-x_k\|\geq f(x_k),\ \ \forall x\in\mathbb{R}^m.
	\end{equation}
	Choose $h_k\in\mathbb{R}^m$ with $\|h_k\|=1$ such that
	\begin{equation}\label{3.16}
		f'(x_k,h_k)<\alpha_k+\frac{\varepsilon}{2}.
	\end{equation}
	Then we can take $r_k\rightarrow 0^+$ (as $k\rightarrow \infty$) such that
	\begin{equation}\label{3.17}
		f(x_k+r_kh_k)<f(x_k)+(\alpha_k+\varepsilon)r_k.
	\end{equation}
	This and \eqref{3.15} imply that
	$$
	f(x_k+r_kh_k)-\alpha_k\|x_k+r_kh_k-x_k\|<\inf_{x\in\mathbb{R}^m}(f(x)-\alpha_k\|x-x_k\|)+\varepsilon r_k.
	$$
	Applying Ekeland variational principle, we can select $y_k\in\mathbb{R}^m$ such that
	\begin{equation}\label{3.18}
		\|y_k-(x_k+r_kh_k)\|<\frac{r_k}{2}, f(y_k)-\alpha_k\|y_k-x_k\|\leq  f(x_k+r_kh_k)-\alpha_k r_k,
	\end{equation}
	and
	\begin{equation}\label{3.19}
		f(x)-\alpha_k\|x-x_k\|+2\varepsilon\|x-y_k\|>f(y_k)-\alpha_k\|y_k-x_k\|,\ \ \forall x\not=y_k.
	\end{equation}
	This implies that
	$$
	\|y_k-x_k\|> r_k-\frac{r_k}{2}=\frac{r_k}{2}\ {\rm and}\ \|y_k-x_k\|< r_k+\frac{r_k}{2}=\frac{3}{2}r_k,
	$$
	and thus $y_k\not=x_k$. Let us consider a function $g_{\varepsilon}\in\Gamma_0(\mathbb{R}^m)$ defined by
	$$
	g_{\varepsilon}(x):=f(x)+\varepsilon\langle h_k, x-x_k\rangle\ \ {\rm for\ all} \ x\in\mathbb{R}^m.
	$$
	By virtue of \eqref{3.14}, \eqref{3.15}, \eqref{3.16} and \eqref{3.19}, one has
	\begin{eqnarray*}
		g_{\varepsilon}(y_k)=f(y_k)+\varepsilon\langle h_k, y_k-x_k\rangle&=&f(y_k)+\varepsilon\langle h_k, y_k-(x_k+r_kh_k)\rangle+\varepsilon r_k\\
		&\geq&\alpha_k\|y_k-x_k\|-\varepsilon\|y_k-(x_k+r_kh_k)\|+\varepsilon r_k\\
		&\geq&\alpha_k\cdot\frac{3}{2}r_k+\frac{\varepsilon}{2} r_k>0.
	\end{eqnarray*}
	If $\inf_{\|h\|=1}g_{\varepsilon}'(y_k, h)\geq 0$, then for any $x\not=y_k$, one has
	$$
	g_{\varepsilon}(x)-g_{\varepsilon}(y_k)\geq g_{\varepsilon}'\big(y_k,\frac{x-y_k}{\|x-y_k\|}\big)\|x-y_k\|\geq \inf_{\|h\|=1}g_{\varepsilon}'(y_k, h)\|x-y_k\|\geq 0.
	$$
	This and $g_{\varepsilon}(y_k)>0$ imply that $S_{g_{\varepsilon}}=\emptyset$, and thus $\tau_{\min}(g_{\varepsilon})=+\infty$, which contradicts (iii).
	
	Next, we consider the case $\inf_{\|h\|=1}g_{\varepsilon}'(y_k, h)<0$. For any $h\in\mathbb{R}^m$ with $\|h\|=1$ and $t>0$, by \eqref{3.19}, one has
	\begin{eqnarray*}
		\frac{g_{\varepsilon}(y_k+th)-g_{\varepsilon}(y_k)}{t}&=&\frac{f(y_k+th)-f(y_k)}{t}+\varepsilon\langle h_k, h\rangle\\
		&\geq&\frac{1}{t}\big(\alpha_k\|y_k+th-x_k\|-\alpha_k\|y_k-x_k\|-2\varepsilon \|y_k+th-y_k\|\big)+\varepsilon\langle h_k, h\rangle\\
		&\geq&\alpha_k-2\varepsilon-\varepsilon
	\end{eqnarray*}
	and consequently
	$$
	0>\inf_{\|h\|=1}g_{\varepsilon}'(y_k, h)\geq\alpha_k-2\varepsilon-\varepsilon\geq-4\varepsilon.
	$$
	Thanks to \text{\color{blue}Lemma 1} and \cref{theo:3.1}, we obtain $\tau_{\min}(g_{\varepsilon})\geq\frac{1}{4\varepsilon}$, which contradicts (iii) as $\varepsilon$ is arbitrary. The proof is complete.\end{proof}%\hfill$\Box$\\
%and $\frac{f(x_k)}{|x_k|}\rightarrow 0$, one has $f'(x_k)\rightarrow 0$, which means that the asymptotic qualification condition as said in [SIAM Theorem 3.1] is invalid.

%\noindent{\bf Remark 3.3.}
\begin{rema} $(a)$ Compared with \cite[Theorem 7]{45} in which a subdifferential characterization of stability of global error bounds was established with the aid of the so-called asymptotic qualification condition, \cref{theo:3.3} studies the stability of global error bounds via directional derivatives without additional hypothesis.  It is known from \cref{theo:3.3} that the condition \eqref{3-9} is sufficient for the stability of global error bounds as said in (ii) of \cref{theo:3.3}, and is necessary for the stability as in (iii) of \cref{theo:3.3}.

	$(b)$ It should be  noted that the condition \eqref{3-9} is not sufficient for the stability of global error bounds as in (iii) of \cref{theo:3.3}, and the assumption $S_f\subseteq S_{g}$ for the stability as said in (ii) of \cref{theo:3.3} is crucial. To see this, let us consider the following example:
	%\newpage
	\begin{exam}
		%\begin{itemize}
		\item Let $f(x):=e^x-1$ for all $x\in\mathbb{R}$. Then $S_f=(-\infty, 0]$, ${\rm bdry}(S_f)=\{0\}$ and $|\inf_{|h|=1}f'(0, h)|=1>0$. However, for any $\varepsilon\in (0,+\infty)$, let us consider the function $g_{\varepsilon}(x):=f(x)-\varepsilon x$ for all $x\in\mathbb{R}$. Then one can verify that $g_{\varepsilon}$ has two different zero points which are denoted by $x_1:=\bar x<0$ and $x_2:=0$ and $S_{g_{\varepsilon}}=[\bar x, 0]$. Thus $S_f\not\subseteq S_{g_{\varepsilon}}$ and for any $x<\bar x$, one has
		$$
		\frac{d(x, S_{g_{\varepsilon}})}{g_{\varepsilon}(x)}=\frac{\bar x-x}{e^x-1-\varepsilon x}\rightarrow \frac{1}{\varepsilon} \ {\rm as} \ x\rightarrow -\infty.
		$$
		This implies that
		$$
		\tau_{\min}(g_{\varepsilon})\geq\frac{1}{2\varepsilon},
		$$
		and consequently the global stability (for $f$) as said in (iii) of \cref{theo:3.3}   does not hold as $\varepsilon>0$ is arbitrary.
		%\end{itemize}
	\end{exam}
	Further, a natural question arises  from the above example:
	\begin{center}
		{\it Does there exist some type of stability of global error bounds that can be characterized by condition \eqref{3-9}?}
	\end{center}
	We  do not have an answer to this question. However, if the answer is affirmative, we conjecture that such global stability should be strictly stronger than that of (ii) and weaker than that of (iii) in \cref{theo:3.3}.%\hfill$\Box$\\
	
\end{rema}

%It is noted that the global stability of error bounds may still holds even though the asymptotic qualification condition in [SIAM Theorem 3.1] does not hold. ???

The following theorem gives characterizations of the stability of global error bounds for a convex inequality as said in (iii)  of \cref{theo:3.3}.

\begin{theo}\label{theo:3.4}
	Let $f\in\Gamma_0(\mathbb{R}^m)$ be such that ${\rm bdry}(S_f)\subseteq f^{-1}(0)$. Then the following statements are equivalent:
	\begin{itemize}
		\item[\rm (i)] There exists $\tau\in (0, +\infty)$ such that \eqref{3-9} holds and the following qualification condition is satisfied:\\
		$\mathbf{(QC)}$ For any sequence $\{z_k\}\subseteq S_f\backslash {\rm bdry}(S_f)$, one has
		\begin{equation}\label{3.20}
			\liminf_{k\rightarrow\infty}\Big|\inf_{\|h\|=1}f'(z_k,h)\Big|>\tau
		\end{equation}
		if there is a sequence $\{x_k\}\subseteq{\rm bdry}(S_f)$ satisfying $\lim_{k\rightarrow\infty}\frac{f(z_k)-f(x_k)}{\|z_k-x_k\|}=0$.
		\item[\rm (ii)] There exist constants $c,\varepsilon\in (0, +\infty)$ such that for all $g\in\Gamma_0(\mathbb{R}^m)$ satisfying \eqref{3-11}, one has $\tau_{\min}(g)\leq c$;
		\item[\rm (iii)] There exist constants $c,\varepsilon>0$ such that for any $\bar x\in{\rm bdry}(S_f)$ and $u\in\mathbb{R}^m$ with $\|u\|\leq 1$, one has $\tau_{\min}(g_{u, \varepsilon})\leq c$, where $g_{u, \varepsilon}(x):=f(x)+\varepsilon\langle u, x-\bar x\rangle$ for all $x\in \mathbb{R}^m$.
	\end{itemize}
\end{theo}
%\lim_{k\rightarrow\infty}\frac{f(z_k)}{d(z_k, {\rm bdry}(S_f))}=0\ \ {\rm and}\ \
\begin{proof} (i)\,$\Rightarrow$\,(ii): Based on \text{\color{blue}Remark 4} and the proof of \cref{theo:3.3}, we only need to consider the case $\inf\limits_{\|h\|=1}f'(\bar x, h)\leq 0$ for all $\bar x\in{\rm bdry}(S_f)$.
	We first prove the following claim:\\
	
	\noindent{\it Claim: There exists $\varepsilon_0>0$ such that for all $x_0\in{\rm bdry}(S_f)$, one has}
	\begin{equation}\label{3.21}
		\inf\left\{\Big|\inf_{\|h\|=1}f'(z_0,h)\Big|: z_0\in\mathbb{R}^m, f(z_0)\geq-\varepsilon_0\|z_0-x_0\|\right\}\geq\tau.
	\end{equation}
	
	Suppose on the contrary that there exist $\varepsilon_k\rightarrow 0^+$, $x_k\in{\rm bdry}(S_f)$ and $z_k\in\mathbb{R}^m$ such that
	\begin{equation}\label{3.22}
		f(z_k)\geq-\varepsilon_k\|z_k-x_k\|\ \ {\rm and} \ \ \Big|\inf_{\|h\|=1}f'(z_k,h)\Big|<\tau\ \ {\rm for\ all } \ k.
	\end{equation}
	Then $f(z_k)\leq 0$ for all $k$ (otherwise, similar to the proof of \eqref{3.13}, one can prove that \\$\big|\inf_{\|h\|=1}f'(z_k,h)\big|>\tau$, a contradiction). By  \eqref{3-9}, one has $z_k\in S_f\backslash {\rm bdry}(S_f)$ and it follows from \eqref{3.22} that
	$$
	0\geq\frac{f(z_k)-f(x_k)}{\|z_k-x_k\|}=\frac{f(z_k)}{\|z_k-x_k\|}\geq-\varepsilon_k.
	$$
	This and the qualification condition in (i) imply  that
	$$\liminf_{k\rightarrow\infty}\Big|\inf_{\|h\|=1}f'(z_k,h)\Big|>\tau,
	$$ which contradicts \eqref{3.22}. Hence the claim is proved.\qed
	
	Let $\varepsilon>0$ be such that $\varepsilon<\min\{\varepsilon_0, \tau\}$. Suppose that $g\in\Gamma_0(\mathbb{R}^m)$ satisfies \eqref{3-11}. Take any $\bar x\in{\rm bdry}(S_f)\cap g^{-1}(0)$. Then for any $x\in\mathbb{R}^m$ with $g(x)>0$, one has
	$$
	f(x)\geq g(x)+(f(\bar x)-g(\bar x))-\varepsilon\|x-\bar x\|>-\varepsilon\|x-\bar x\|.
	$$
	Using   \eqref{3.21}, one obtains
	$$
	\inf_{\|h\|=1}f'(x,h)<-\tau
	$$
	and thus
	$$
	\inf_{\|h\|=1}g'(x,h)<\inf_{\|h\|=1}f'(x,h)+\varepsilon<-(\tau-\varepsilon).
	$$
	By  virtue of \text{\color{blue}Lemma 1} and \cref{theo:3.1} we derive the inequality $\tau_{\min}(g)\leq\frac{1}{\tau-\varepsilon}$.
	\vskip 2mm
	Note that (ii)\,$\Rightarrow$\,(iii) follows immediately and it remains to prove (iii)\,$\Rightarrow$\,(i).
	\vskip 2mm
	Suppose on the contrary that (i) does not hold. Based on (iii)\,$\Rightarrow$\,(i) in \cref{theo:3.3}, we only consider the case that there exist $z_k\in S_f\backslash{\rm bdry}(S_f)$ and $x_k\in {\rm bdry}(S_f)$ such that
	\begin{equation}\label{3.23}
		\lim_{k\rightarrow\infty}\frac{f(z_k)-f(x_k)}{\|z_k-x_k\|}=0\ \ {\rm and} \ \ \alpha_k:=\inf_{\|h\|=1}f'(z_k,h)\rightarrow 0^-.
	\end{equation}
	Let $\varepsilon>0$ be arbitrary. Without loss of generality, we can assume that $\frac{z_k-x_k}{\|z_k-x_k\|}\rightarrow h_0$ (considering subsequence if necessary). Then $\|h_0\|=1$. Suppose that $k$ is sufficiently large such that
	%and choose $h_0\in\mathbb{R}^m$ with $\|h_0\|=1$ such that $\langle h_0,h_0\rangle=1$.
	\begin{equation}\label{3.24}
		\alpha_k+\varepsilon>0\ \ {\rm and} \ \ \frac{f(z_k)-f(x_k)}{\|z_k-x_k\|}+\varepsilon\Big\langle h_0, \frac{z_k-x_k}{\|z_k-x_k\|}\Big\rangle>0.
	\end{equation}
	Let us consider a function $g_{h_0,\varepsilon}\in\Gamma_0(\mathbb{R}^m)$ defined by
	$$
	g_{h_0,\varepsilon}(x):=f(x)+\varepsilon\langle h_0, x-x_k\rangle\ \ {\rm for\ all} \ x\in\mathbb{R}^m.
	$$
	Then $g_{h_0,\varepsilon}(z_k)=f(z_k)+\varepsilon\langle h_0, z_k-x_k\rangle>0$ by \eqref{3.24} and thus
	$$
	0>\inf_{\|h\|=1}g_{h_0,\varepsilon}'(z_k,h)\geq\inf_{\|h\|=1}f'(z_k,h)-\varepsilon
	=\alpha_k-\varepsilon>-2\varepsilon.
	$$
	This together with \text{\color{blue}Lemma 1} and \cref{theo:3.1} implies  that $\tau_{\min}(g_{h_0,\varepsilon})\geq\frac{1}{2\varepsilon}$, which contradicts (iii) as $\varepsilon$ is arbitrary. The proof is complete. \end{proof}%\hfill$\Box$\\

%\noindent{\bf Remark 3.4.}
\begin{rema} Note  that  condition $\mathbf{(QC)}$ in \eqref{3.20} is necessary for the stability of global error bounds. Consider \text{\color{blue}Example 9} given in \text{\color{blue}Remark 8} again. Let $f(x):=e^x-1$ for all $x\in\mathbb{R}$. Then the stability of global error bounds for $f$ as said in (iii) of \cref{theo:3.3} does not hold. Further, for any $z_k\rightarrow -\infty$, one can verify that
	$$
	\Big|\inf_{|h|=1}f'(z_k,h)\Big|=e^{z_k}\rightarrow 0 \ {\rm as} \ k\rightarrow \infty,
	$$
	which means that $\mathbf{(QC)}$ (for $f$) in \eqref{3.20} fails.\end{rema}%\hfill$\Box$

\section{Stability of error bounds for Semi-infinite Convex Constraint Systems}
In this section, we study local and global error bounds for  semi-infinite convex constraint systems, and mainly provide characterizations of stability of error bounds by directional derivatives. We first recall the definition of error bounds for semi-infinite convex constraint systems.

For semi-infinite convex constraint systems in $\mathbb{R}^m$, we mean  the problem of finding $x\in\mathbb{R}^m$ satisfying:
\begin{equation}\label{4.1}
	f_i(x)\leq 0\ \ {\rm for\ all }\ i\in I,
\end{equation}
where $I$ is a compact, possibly infinite, Hausdorff space, $f_i:\mathbb{R}^m\rightarrow \mathbb{R}, i\in I$, are given convex functions such that $i\mapsto f_i(x)$ is continuous on $I$ for each $x\in\mathbb{R}^m$. It is known from \cite[Theorem 7.10]{roc} that in this case, $(i,x)\mapsto f_i(x)$ is continuous on $I\times \mathbb{R}^m$.

Let $F\in C(I\times \mathbb{R}^m, \mathbb{R})$ be defined by $F(i,x):=f_i(x)$ for all $(i,x)\in I\times \mathbb{R}^m$. We denote the solution set of system \eqref{4.1} by
\begin{equation}\label{4.2}
	S_F:=\{x\in \mathbb{R}^m: f_i(x)\leq 0\  {\rm for\ all }\ i\in I\}.
\end{equation}
For any  $\ x\in\mathbb{R}^m$, we set
\begin{equation}\label{4.3}
	f(x):=\max\{f_i(x): i\in I\} \ \ {\rm and} \ \ I_f(x):=\{i\in I: f_i(x)=f(x)\}.
\end{equation}
Recall that system \eqref{4.1} is said to have a global error bound if there exists a constant
$\tau\in(0,+\infty)$ such that
\begin{equation}\label{4.4}
	d(x, S_F)\leq\tau [f(x)]_+\ \ \forall x\in\mathbb{R}^m.
\end{equation}
We denote by $\tau_{\min}(F):=\inf\{\tau>0:\eqref{4.4}\ {\rm holds}\}$ the global error bound modulus of $S_F$.

For $\bar x\in{\rm bdry}(S_F)$, system \eqref{4.1} is said to have a local error bound at $\bar x$ if there exist constants
$\tau,\delta\in(0,+\infty)$ such that
\begin{equation}\label{4.5}
	d(x, S_F)\leq\tau [f(x)]_+\ \ \forall x\in B(\bar x,\delta).
\end{equation}
We denote by $\tau_{\min}(F,\bar x):=\inf\{\tau>0:\ {\rm there\ exists} \ \delta>0 \ {\rm such \ that}\ \eqref{4.5}\ {\rm holds}\}$ the local error bound modulus of $S_F$ at $\bar x$.

We first study stability of local error bounds for semi-infinite convex constraint system \eqref{4.1} and aim to provide characterizations of the stability of local error bounds for system \eqref{4.1}. To this aim, we need the following proposition which is of independent interest.

\begin{prop}\label{prop:4.1}
	Let $x\in\mathbb{R}^m$. Then for any $h\in \mathbb{R}^m$, one has
	\begin{equation} \label{4.6}
		%\begin{equation}\label{4.6}
		f'(x, h)=\max_{i\in I_f(x)}f_i'(x, h).
		%\end{equation}
	\end{equation}
\end{prop}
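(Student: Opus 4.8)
The plan is to prove the two inequalities in \eqref{4.6} separately, the nontrivial one being $f'(x,h)\le\max_{i\in I_f(x)}f_i'(x,h)$. First I record the standing facts: since each $f_i$ is real-valued and convex and $I$ is compact with $i\mapsto f_i(x)$ continuous, the maximum $f(x)=\max_{i\in I}f_i(x)$ is attained, so $I_f(x)\ne\emptyset$, and $f$ is a finite convex function, hence its directional derivatives exist and are given by the infimum formula \eqref{2.2}. The inequality $f'(x,h)\ge\max_{i\in I_f(x)}f_i'(x,h)$ is then immediate: for $i\in I_f(x)$ one has $f(x)=f_i(x)$ and $f(x+th)\ge f_i(x+th)$, so the difference quotient for $f$ dominates that for $f_i$ for every $t>0$; letting $t\to0^+$ gives $f'(x,h)\ge f_i'(x,h)$, and I then maximize over $i\in I_f(x)$.

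For the reverse inequality I would argue by compactness and a subnet extraction. Fix a sequence $t_k\downarrow0$; by \eqref{2.2} the quotients $q_k:=\big(f(x+t_kh)-f(x)\big)/t_k$ decrease to $f'(x,h)$. For each $k$ choose $i_k\in I$ with $f(x+t_kh)=f_{i_k}(x+t_kh)$, which is possible by compactness of $I$ and continuity of $i\mapsto f_i(x+t_kh)$. Since $I$ is compact Hausdorff, the net $(i_k)$ admits a convergent subnet $i_{k_\beta}\to\bar i\in I$. Using the joint continuity of $(i,y)\mapsto f_i(y)$ on $I\times\mathbb{R}^m$ together with $f(x+t_{k_\beta}h)\to f(x)$, I identify the limit index as active at $x$: from $f_{i_{k_\beta}}(x+t_{k_\beta}h)=f(x+t_{k_\beta}h)\to f(x)$ and $f_{i_{k_\beta}}(x+t_{k_\beta}h)\to f_{\bar i}(x)$ I conclude $f_{\bar i}(x)=f(x)$, i.e.\ $\bar i\in I_f(x)$.

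The crux is to transfer the estimate to a single, fixed function. Because $i_k$ changes with $k$, I cannot pass directly to $f_{i_k}'(x,h)$; instead I exploit the monotonicity of convex difference quotients with an auxiliary scale $s>0$. Since $f_{i_k}(x)\le f(x)$, I have $q_k\le\big(f_{i_k}(x+t_kh)-f_{i_k}(x)\big)/t_k$, and convexity of $f_{i_k}$ gives, for every $s>t_k$, the bound $\big(f_{i_k}(x+t_kh)-f_{i_k}(x)\big)/t_k\le\big(f_{i_k}(x+sh)-f_{i_k}(x)\big)/s$. Fixing $s$ and passing to the limit along the subnet (for indices with $t_{k_\beta}<s$), the right-hand side converges, by joint continuity and $i_{k_\beta}\to\bar i$, to $\big(f_{\bar i}(x+sh)-f_{\bar i}(x)\big)/s$, while $q_{k_\beta}\to f'(x,h)$; hence $f'(x,h)\le\big(f_{\bar i}(x+sh)-f_{\bar i}(x)\big)/s$ for every $s>0$. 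Letting $s\to0^+$ and using \eqref{2.2} for $f_{\bar i}$ yields $f'(x,h)\le f_{\bar i}'(x,h)\le\max_{i\in I_f(x)}f_i'(x,h)$, which combined with the first inequality gives \eqref{4.6}.

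I expect the main obstacle to be exactly this decoupling: the maximizing index $i_k$ is $k$-dependent, and $I$ need not be metrizable, so sequences must be replaced by subnets. The device of fixing the larger step $s$ \emph{before} taking the limit in the index, rather than letting $t_k\to0$ and the index move simultaneously, is what makes the limit of the quotient computable, since it isolates the continuity argument at the fixed second coordinate $x+sh$.
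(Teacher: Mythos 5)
Your proof is correct, but it takes a genuinely different route from the paper's. Both arguments handle the easy inequality $f'(x,h)\ge\max_{i\in I_f(x)}f_i'(x,h)$ identically, via domination of difference quotients. For the nontrivial reverse inequality, however, the paper argues dually: it uses \eqref{2.4} to write $f'(x,h)=\langle z^*,h\rangle$ for some $z^*\in\partial f(x)$, invokes the Ioffe--Tikhomirov max-rule $\partial f(x)={\rm co}\big(\bigcup_{i\in I_f(x)}\partial f_i(x)\big)$ (cited from \cite{SIAM13}) to decompose $z^*=\sum_{k=1}^N\lambda_k z_k^*$ with $z_k^*\in\partial f_{i_k}(x)$ and $i_k\in I_f(x)$, and concludes from $\langle z_k^*,h\rangle\le f_{i_k}'(x,h)$. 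Your argument is purely primal: maximizing indices $i_k$ at the perturbed points $x+t_kh$, a convergent subnet $i_{k_\beta}\to\bar i$ (correctly using subnets rather than subsequences, since $I$ need not be metrizable), identification of $\bar i\in I_f(x)$ via joint continuity, and the device of fixing the larger step $s$ before passing to the limit in the index, which together with monotonicity of convex difference quotients and \eqref{2.2} yields $f'(x,h)\le f_{\bar i}'(x,h)$; every step is sound. As for what each approach buys: the paper's proof is shorter but outsources the substance to the subdifferential max-rule over a compact index set, a nontrivial theorem whose proof is of comparable depth to the proposition itself; yours is self-contained, avoids dual objects entirely (incidentally closer to the paper's own stated aim of working without dual-space information), and constructively exhibits an index $\bar i\in I_f(x)$ attaining the maximum in \eqref{4.6}, whereas in the paper attainment only emerges a posteriori by combining the two inequalities.
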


\begin{proof}  Let $h\in \mathbb{R}^m$. Take any $i\in I_f(x)$. Then for any $t>0$, one has
	$$
	\frac{f_i(x+th)-f_i(x)}{t}\leq \frac{f(x+th)-f(x)}{t}
	$$
	and thus $f_i'(x,h)\leq f'(x,h)$. This implies that
	\begin{equation}\label{4.7}
		f'(x, h)\geq\max_{i\in I_f(x)}f_i'(x, h).
	\end{equation}
	%, also in Hantoute \& L\'{o}pez [10] and Hantoute-L\'{o}pez-Z\u{a}linescu [11])
	By virtue of \eqref{2.4}, one has
	$$
	f'(x,h)=\max_{x\in\partial f(x)}\langle x^*, h\rangle,
	$$
	and thus there is $z^*\in\partial f(x)$ such that
	\begin{equation}\label{4.8}
		f'(x,h)=\langle z^*, h\rangle.
	\end{equation}
	Note that the subdifferential of the function $f$ at a point $x\in\mathbb{R}^m$ is given by (see Ioffe \& Tikhomirov \cite{SIAM13})
	$$
	\partial f(x)={\rm co}\Big(\bigcup_{i\in I_f(x)}\partial f_i(x)\Big)
	$$
	where ``${\rm co}$" denotes the convex hull of a set. Then by \eqref{4.8}, there exist $\lambda_1,\cdots,\lambda_N\geq 0$, $i_1,\cdots,i_N\in I_f(x)$ and $z_k^*\in\partial f_{i_k}(x), k=1,\cdots, N$ such that
	\begin{equation*}
		\sum_{k=1}^N\lambda_{k}=1\ \ {\rm and} \ \ z^*=\sum_{k=1}^N\lambda_{k}z_k^*.
	\end{equation*}
	This and \eqref{4.8} imply that
	$$
	f'(x,h)=\langle z^*, h\rangle=\sum_{k=1}^N\lambda_{k}\langle z_k^*, h\rangle\leq\sum_{k=1}^N\lambda_{k}f'_{i_k}(x,h)\leq\max_{i\in I_f(x)}f'_i(x,h).
	$$
	Hence \eqref{4.6} follows from \eqref{4.7} and the above inequality. The proof is complete.\end{proof} %\hfill$\Box$\\

The following theorem gives characterizations (by directional derivatives) of stability of local error bounds for system \eqref{4.1}.

\begin{theo}\label{theo:4.1}
	Let $\bar x\in \mathbb{R}^m$ be such that $f(\bar x)=0$. Then the following statements are equivalent:
	\begin{itemize}
		\item[\rm(i)] $\inf_{\|h\|=1}f'(\bar x, h)\not=0$.
		\item[\rm(ii)] There exist constants $c,\varepsilon>0$ such that if
		\begin{eqnarray*}
			&G\in C(I\times\mathbb{R}^m, \mathbb{R}), g_i(x):=G(i,x), g_i \ {\it is \ convex};\\
			&g(x):=\max_{i\in I}g_i(x),  I_g(x):=\{i\in I: g_i(x)=g(x)\};\\
			&g(\bar x)=0;\\
			&I_g(\bar x)\subseteq I_f(\bar x)\ {\it whenever}\ \inf_{\|h\|=1}f'(\bar x, h)<0;\\
			&I_f(\bar x)\subseteq I_g(\bar x)\ {\it whenever}\ \inf_{\|h\|=1}f'(\bar x, h)>0;\\
			&\limsup\limits_{x\rightarrow\bar x}\frac{|f_i(x)-g_i(x)-(f_i(\bar x)-g_i(\bar x))|}{\|x-\bar x\|}\leq\varepsilon, \ \forall i\in I_f(\bar x)\cap I_g(\bar x),
		\end{eqnarray*}
		then one has $\tau_{\min}(G,\bar x)\leq c$.
		\item[\rm(iii)] There exist constants $c,\varepsilon>0$ such that for all $u^*\in\mathbb{R}^m$ with $\|u^*\|\leq 1$, one has $\tau_{\min}(G,\bar x)\leq c$, where $G\in C(I\times\mathbb{R}^m, \mathbb{R})$ is defined by
		\begin{equation}\label{4.15}
			G(i,x):=f_i(x)+\varepsilon\langle u^*, x-\bar x\rangle\ \ {\it for\ all} \ (i,x)\in I\times\mathbb{R}^m.
		\end{equation}
	\end{itemize}
\end{theo}

\begin{proof} We set
	$$
	\beta(f,\bar x):=\inf_{\|h\|=1}f'(\bar x, h).
	$$
	(i)\,$\Rightarrow$\,(ii): Suppose that $\beta(f,\bar x)>0$. Then one can verify that $S_F=\{\bar x\}$ by \text{\color{blue}Remark 4}. Choose any $\varepsilon\in (0,\beta(f,\bar x))$. Suppose that $G,g_i$ and $g$ satisfy all conditions said in (ii). Then for any $i\in I_f(\bar x)\subseteq I_g(\bar x)$, one has
	$$
	g_i'(\bar x,h)\geq f_i(\bar x, h)-\varepsilon
	$$
	and it follows from \cref{prop:4.1} that
	\begin{eqnarray*}
		\inf_{\|h\|=1}g'(\bar x,h)=\inf_{\|h\|=1}\max_{i\in I_g(\bar x)}g_i'(\bar x,h)&\geq& \inf_{\|h\|=1}\max_{i\in I_f(\bar x)}g_i'(\bar x,h)\\
		&\geq& \Big(\inf_{\|h\|=1}\max_{i\in I_f(\bar x)}f_i'(\bar x,h)\Big)-\varepsilon\\
		&=&\inf_{\|h\|=1}f'(\bar x,h)-\varepsilon\\
		&=&\beta(f,\bar x)-\varepsilon>0
	\end{eqnarray*}
	(thanks to $I_f(\bar x)\subseteq I_g(\bar x)$). Applying \text{\color{blue}Proposition 3}, we derive the inequality
	$$
	\tau_{\min}(G,\bar x)=\tau_{\min}(g,\bar x)\leq\frac{1}{\beta(f,\bar x)-\varepsilon}.
	$$
	Suppose that $\beta(f,\bar x)<0$. Choose any $\varepsilon>0$ such that $\beta(f,\bar x)+\varepsilon<0$. Then for any $i\in I_g(\bar x)\subseteq I_f(\bar x)$, one has
	$$
	g_i'(\bar x,h)\leq f_i(\bar x, h)+\varepsilon
	$$
	and it follows from \cref{prop:4.1} that
	\begin{eqnarray*}
		\inf_{\|h\|=1}g'(\bar x,h)=\inf_{\|h\|=1}\max_{i\in I_g(\bar x)}g_i'(\bar x,h)&\leq& \inf_{\|h\|=1}\max_{i\in I_g(\bar x)}(f_i'(\bar x,h)+\varepsilon)\\
		&\leq& \Big(\inf_{\|h\|=1}\max_{i\in I_f(\bar x)}f_i'(\bar x,h)\Big)+\varepsilon\\
		&=&\inf_{\|h\|=1}f'(\bar x,h)+\varepsilon\\
		&=&\beta(f,\bar x)+\varepsilon<0
	\end{eqnarray*}
	(thanks to $I_g(\bar x)\subseteq I_f(\bar x)$). Applying \text{\color{blue}Proposition 3} again, we obtain the inequality
	$$
	\tau_{\min}(G,\bar x)=\tau_{\min}(g,\bar x)\leq\frac{1}{-\beta(f,\bar x)-\varepsilon}.
	$$

	(ii)\,$\Rightarrow$\,(iii): The implication follows immediately as $I_f(\bar x)=I_g(\bar x)$.
	
	(iii)\,$\Rightarrow$\,(i): Let $u^*\in\mathbb{R}^m$ with $\|u^*\|\leq 1$ and $G\in C(I\times \mathbb{R}^m, \mathbb{R})$ be defined as \eqref{4.15}. Note that
	$$
	g_i(x)=G(i,x)=f_i(x)+\varepsilon\langle u^*, x-\bar x\rangle
	$$
	and thus
	$$
	g(x)=\max_{i\in I}g_i(x)=\max_{i\in I}(f_i(x)+\varepsilon\langle u^*, x-\bar x\rangle)=f(x)+\varepsilon\langle u^*, x-\bar x\rangle.
	$$
	This means that the implication follows from (iii)\,$\Rightarrow$\,(i) as in
	%Theorem 3.2.
	\cref{theo:3.2}.
	The proof is complete.\end{proof}%\hfill$\Box$\\

%\noindent{\bf Remark 4.1.}
\begin{rema}
	(a) \cref{theo:4.1}, given in terms of directional derivatives, can be regarded as an equivalent version and a supplement of \cite[Theorem 4]{45} in which a subdifferential characterization of stability of local error bounds for system \eqref{4.1} was established. Further, in contrast with \cite[Theorem 4]{45}, the stability of local error bounds for system \eqref{4.1} only requires that all component functions in system \eqref{4.1} have the same $\varepsilon$-linear perturbation.
	
	(b) It  should be observed that the condition $I_f(\bar x)\subseteq I_g(\bar x)$ or $I_g(\bar x)\subseteq I_f(\bar x)$ in \cref{theo:4.1} is crucial. To see this, we consider the following two examples:
	\begin{itemize}
		\item[$\triangle$]\textit{Let $f_i:\mathbb{R}^2\rightarrow \mathbb{R}$ be defined by $f_i(x):=|x_i|,i=1,2$ for all $x=(x_1,x_2)\in\mathbb{R}^2$, $\bar x=(0,0)$, $F:=(f_1,f_2)$ and $f:=\max\{f_1,f_2\}$. Then
			$$I_f(\bar x)=\{1,2\}\  \ {\it and} \ \ \inf_{\|h\|=1}f'(\bar x,h)=\frac{\sqrt{2}}{2}>0.
			$$
			However, for each $\varepsilon>0$, we define functions $g_{1,\varepsilon}$ and $g_{2,\varepsilon}$ by
			$$
			g_{1,\varepsilon}(x):=|x_1|+\varepsilon|x_2|, g_{2,\varepsilon}(x):=|x_2|-\varepsilon, \ \ {\it for\ all} \ x=(x_1,x_2)\in\mathbb{R}^2.
			$$
			We set $G_{\varepsilon}:=(g_{1,\varepsilon}, g_{2,\varepsilon})$ and $g_{\varepsilon}:=\max\{g_{1,\varepsilon}, g_{2,\varepsilon}\}$. Then one can verify that $I_g(\bar x)=\{1\}$ and thus $I_f(\bar x)\not\subseteq I_g(\bar x)$. Note that
			$$
			S_{G_{\varepsilon}}=\{\bar x\}\ \ {\it and} \ \ {\rm Lip}(f_1-g_{1,\varepsilon})\leq \varepsilon.
			$$
			For any $\delta\in  (0,\varepsilon^{-1})$, we set $z_{\delta}:=(0,\delta)\in\mathbb{R}^2$. Then $d(z_{\delta}, S_{G_{\varepsilon}})=\delta$ and $g_{\varepsilon}(z_{\delta})=\varepsilon\delta$, which implies that $\tau_{\min}(G_{\varepsilon},\bar x)\geq\frac{1}{\varepsilon}$.}
		\item[$\triangle\triangle$]\textit{Let $f_1,f_2:\mathbb{R}^2\rightarrow \mathbb{R}$ be defined by $f_1(x):=x_1$ and $f_1(x):=-x_1+|x_2|-1$ for all $x=(x_1,x_2)\in\mathbb{R}^2$, $\bar x=(0,0)$, $F:=(f_1,f_2)$ and $f:=\max\{f_1,f_2\}$. Then
			$$I_f(\bar x)=\{1\}\  \ {\it and} \ \ \inf_{\|h\|=1}f'(\bar x,h)=\inf_{\|h\|=1}f_1'(\bar x,h)=-1<0.
			$$
			However, for each $\varepsilon>0$, we define functions $g_{1,\varepsilon}$ and $g_{2,\varepsilon}$ as
			$$
			g_{1,\varepsilon}(x):=x_1+\varepsilon|x_2|, g_{2,\varepsilon}(x):=-x_1+\varepsilon|x_2|, \ \ {\rm for\ all} \ x=(x_1,x_2)\in\mathbb{R}^2.
			$$
			We set $G_{\varepsilon}:=(g_{1,\varepsilon}, g_{2,\varepsilon})$ and $g_{\varepsilon}:=\max\{g_{1,\varepsilon}, g_{2,\varepsilon}\}$. Then one can verify that $I_g(\bar x)=\{1,2\}$ and thus $I_g(\bar x)\not\subseteq I_f(\bar x)$. Note that
			$$
			S_{G_{\varepsilon}}=\{\bar x\}\ \ {\it and} \ \ {\rm Lip}(f_1-g_{1,\varepsilon})\leq \varepsilon.
			$$
			For any $\delta\in  (0,\varepsilon^{-1})$, set $z_{\delta}:=(0,\delta)\in\mathbb{R}^2$. Then $d(z_{\delta}, S_{G_{\varepsilon}})=\delta$ and $g_{\varepsilon}(z_{\delta})=\varepsilon\delta$. This means that $\tau_{\min}(G_{\varepsilon},\bar x)\geq\varepsilon^{-1}$.}
	\end{itemize}
\end{rema}

We now turn our attention to the stability of global error bounds for semi-infinite constraint system \eqref{4.1} and mainly give equivalent criterion for such stability. Based on \cref{theo:3.4}, the following theorem establishes equivalent conditions for the stability of global error bounds for the  system \eqref{4.1}.
\begin{theo}\label{theo:4.2}
	The following statements are equivalent:
	\begin{itemize}
		\item[\rm(i)] There exists $\tau\in (0, +\infty)$ such that
		\begin{equation}\label{4.16}
			\inf\left\{\Big|\inf_{\|h\|=1}f'(\bar x,h)\Big|: \bar x\in{\rm bdry}(S_f)\right\}>\tau,
		\end{equation}
		and $\mathbf{(QC)}$ as in \cref{theo:3.4} is satisfied.
		
		\item[\rm(ii)] There exist constants $c,\varepsilon>0$ such that if
		\begin{eqnarray*}
			&G, g_i(x), g(x)\ {\it and} \  I_g(x) \ {\it as\ said \ in\ (ii)\ of\ Theorem \ 13};\\                                                                      &\big\{z\in{\rm bdry}(S_f): f_i(z)=g_i(z) \  {\it for\ all} \ i\in I\big\}\not=\emptyset;  \\
			&\sup_{i\in I}{\rm Lip}(f_i-g_i)<\varepsilon;\\
			&I_g(x)\subseteq I_f(x)\ {\it whenever}\ \inf_{\|h\|=1}f'(x, h)< 0;\\
			&I_f(x)\subseteq I_g(x)\ {\it whenever}\ \inf_{\|h\|=1}f'(x, h)>0,
		\end{eqnarray*}
		then one has $\tau_{\min}(G)\leq c$.
		\item[\rm(iii)] There exist constants $c,\varepsilon>0$ such that for all $\bar x\in{\rm bdry}(S_f)$ and $u^*\in\mathbb{R}^m$ with $\|u^*\|\leq 1$, one has $\tau_{\min}(G)\leq c$, where $G\in C(I\times\mathbb{R}^m, \mathbb{R})$ is defined by
		\begin{equation}\label{4.23}
			G(i,x):=f_i(x)+\varepsilon\langle u^*, x-\bar x\rangle\ \ {\it for\ all} \ (i,x)\in I\times\mathbb{R}^m.
		\end{equation}
	\end{itemize}
\end{theo}

\begin{proof} (i)\,$\Rightarrow$\,(ii): Thanks to \text{\color{blue}Remark 4} and the proof of \text{\color{blue} Theorem 13}, we only need to consider the case $\inf_{\|h\|=1}f'(\bar x, h)\leq 0$ for all $\bar x\in{\rm bdry}(S_f)$. By virtue of the claim given  in the proof of \cref{theo:3.4}, there exists $\varepsilon_0>0$ such that for all $x_0\in{\rm bdry}(S_f)$, one has
	\begin{equation}\label{4.24}
		\inf\left\{\Big|\inf_{\|h\|=1}f'(x,h)\Big|: x\in\mathbb{R}^m, f(x)\geq-\varepsilon_0\|x-x_0\|\right\}\geq\tau.
	\end{equation}
	Take any $\varepsilon>0$ such that $\varepsilon<\min\{\varepsilon_0,\tau\}$. Suppose that $G,g_i$ and $g$ satisfy all conditions said in (ii). Let $x\in\mathbb{R}^m$ be such that $g(x)>0$. We claim that
	\begin{equation}\label{4.25}
		\inf_{\|h\|=1}f'(x,h)\leq-\tau.
	\end{equation}
	Granting this, one has
	\begin{eqnarray*}
		\inf_{\|h\|=1}g'(x,h)=\inf_{\|h\|=1}\Big(\max_{i\in I_g(x)}g_i'(x,h)\Big)&\leq&\inf_{\|h\|=1}\Big(\max_{i\in I_g(x)}f_i'(x,h)+\varepsilon\Big)\\
		&\leq&\inf_{\|h\|=1}\Big(\max_{i\in I_f(x)}f_i'(x,h)\Big)+\varepsilon\\
		&=&\inf_{\|h\|=1}f'(x,h)+\varepsilon\\
		&\leq&-(\tau-\varepsilon)
	\end{eqnarray*}
	(thanks to $\sup_{i\in I}{\rm Lip}(f_i-g_i)<\varepsilon$ and $I_g(x)\subseteq I_f(x)$). Applying  \text{\color{blue}Lemma 1} and \cref{theo:3.1}, we derive the inequality $\tau_{\min}(G)\leq \frac{1}{\tau-\varepsilon}$.
	\vskip 2mm
	It remains to prove relation  \eqref{4.25}. For the case that $f(x)>0$, similar to the proof  of \eqref{3.13}, one can verify that \eqref{4.25} holds. Thus we only need to consider the case that $f(x)\leq 0$.
	
	From the second condition in (ii), there is $z_0\in{\rm bdry}(S_f)$ such that $f_i(z_0)=g_i(z_0)$ for all $i\in I$. Then for any $i\in I_g(x)\subseteq I_f(x)$, one has
	\begin{eqnarray*}
		f_i(x)\geq g_i(x)-(f_i(z_0)-g_i(z_0))-\varepsilon\|x-z_0\|=g(x)-\varepsilon\|x-z_0\|>-\varepsilon\|x-z_0\|
	\end{eqnarray*}
	and thus $f(x)>-\varepsilon_0\|x-z_0\|$. This and \eqref{4.24} imply that \eqref{4.25} holds.
	
	(ii)\,$\Rightarrow$\,(iii): The implication follows immediately since $I_f(x)=I_g(x)$ for all $x\in\mathbb{R}^m$.

	(iii)\,$\Rightarrow$\,(i): Let $\bar x\in{\rm bdry}(S_f)$, $u^*\in\mathbb{R}^m$ with $\|u^*\|\leq 1$ and $G\in C(I\times \mathbb{R}^m, \mathbb{R})$ be defined as \eqref{4.23}. Note that
	$$
	g_i(x)=G(i,x)=f_i(x)+\varepsilon\langle u^*, x-\bar x\rangle
	$$
	and consequently
	$$
	g(x)=\max_{i\in I}g_i(x)=\max_{i\in I}(f_i(x)+\varepsilon\langle u^*, x-\bar x\rangle)=f(x)+\varepsilon\langle u^*, x-\bar x\rangle.
	$$
	Thus,  the implication follows from (iii)\,$\Rightarrow$\,(i) as in \cref{theo:3.4}. The proof is complete%.\hfill$\Box$
\end{proof}%

\section{Conclusions}

This paper is devoted to  the study of stability of  local and global error bounds for convex inequality constraint systems including a single convex inequality and  semi-infinite convex constraint systems. The main results provide characterizations (in terms of directional derivatives) of stability of local and global error bounds for a single convex inequality (see \cref{theo:3.2} and \cref{theo:3.4}). When these results are applied to error bounds for semi-infinite convex constraint systems, characterizations of the stability of local and global error bounds are also established in terms of directional derivatives (see \cref{theo:4.1} and  \cref{theo:4.2}). These results show that the stability of error bounds for convex inequality constraint systems can be equivalent to solving the convex optimization/minimization problems defined by directional derivatives over the unit sphere.\\

\noindent{\bf Acknowledgements }The authors are grateful to Professor Constantin Zalinescu for  his constructive remark concerning Lemma 1.

\end{document}